\newcommand{\TN}{\EuScript{T}_n}
\newcommand{\A}{\simeq_A}
\newtheorem{theorem}{Theorem}[section]
\newtheorem*{theorem*}{Theorem}
\newtheorem{proposition}[theorem]{Proposition}
\newtheorem{lemma}[theorem]{Lemma}
\newtheorem{corollary}[theorem]{Corollary}
\newtheorem*{corollary*}{Corollary}
\theoremstyle{definition}
\newtheorem{definition}[theorem]{Definition}
\theoremstyle{remark}
\newtheorem{remark}[theorem]{Remark}
\begin{document}

\markboth{H\'{e}l\`{e}ne Barcelo, Christopher Severs, Jacob A. White}
{Discrete Fundamental Group of Associahedron and Exchange Module}

\title{The Discrete Fundamental Group of the Associahedron, and the Exchange Module}

\author{H\'{e}l\`{e}ne Barcelo }
\address{Mathematical Sciences Research Institute, 17 Gauss Way\\
 Berkeley, CA 94720, United States \\
hbarcelo@msri.org}

\author{Christopher Severs }
\address{eBay Incorporated,  2065 Hamilton Avenue \\
San Jose, CA 95125, United States \\
chris.severs@gmail.com}

\author{Jacob A. White}
\address{Texas A\&M University, Department of Mathematics, Mailstop 3368 \\
College Station, TX 77843-3368, United States \\
jwhite@math.tamu.edu}

\maketitle

\begin{abstract}
The associahedron is an object that has been well studied and has numerous applications, particularly in the theory of operads, the study of non-crossing partitions, 
lattice theory and more recently in the study of cluster algebras. We approach the associahedron from the point of view of discrete homotopy theory. We study the abelianization of the discrete fundamental group, 
and show that it is free abelian of rank $\binom{n+2}{4}$. We also find a combinatorial description for a basis of this rank.
We also introduce the exchange module of the type $A_n$ cluster algebra, used to model the relations in the cluster algebra.
We use the discrete fundamental group to the study of exchange module, and show that it is also free abelian of rank $\binom{n+2}{3}$.


\end{abstract}

\keywords{Associahedron \and Discrete Homotopy Theory \and Cluster Algebras}

\section{Introduction}

The associahedron is a polytope first discovered by Stasheff in the course of his research on operads \cite{stasheff-h} and later rediscovered by Haiman (unpublished) and Lee \cite{lee}. 
The $1$-skeleton of the associahedron has also appeared under a different name, the Tamari lattice \cite{tamari}. Since the associahedron is a simple polytope, it is dual to a simplicial polytope. 
The boundary of the dual polytope is known as the cluster complex of type $A_n$, due to its connections with the cluster algebra of type $A$.
The vertex set consists of all diagonals of a regular $(n+3)$-gon and the facets correspond to triangulations of 
the same $(n+3)$-gon. We denote this complex $\TN$. 
Thus, the vertices of the associahedron are triangulations, 
and there is an edge between two vertices if the corresponding triangulations differ by a single diagonal flip. An example of the 3 dimensional associahedron appears in Fig. \ref{fig:3dasc}.
One goal of this paper is to study the discrete fundamental group of the associahedron.

The notion of discrete homotopy theory has been around for almost a decade. Its origin can be traced back to the early 1970 with the work of Atkin \cite{atkin1}. 
A few years later, Maurer \cite{maurer1} constructed a new fundamental group for graphs, that turns out to be an extension of Atkin's work. This theory was rediscovered later by Malle \cite{malle}.
Finally, in 2001 Barcelo et al \cite{bklw} defined a notion of ``discrete" homotopy theory for graphs and for any simplicial complex. Their work turned out to be a generalization of the work of Atkins, Maurer, and Malle.
The notion of discrete homotopy was developed to capture some combinatorial properties of simplicial complexes (including graphs) that the classical homotopy theory could not account for.
Since then, quite surprisingly, the notion of discrete fundamental groups has turned up in several areas of mathematics .
More strikingly, in a recent paper, Barcelo and Smith \cite{barcelo-smith} showed that the discrete fundamental group of the permutahedron is isomorphic to the (classical) fundamental group of the complement (in $\mathbb{R}^n$) 
of a subspace arrangement called the 3-equal arrangement, a generalisation of the braid arrangement. Furthermore, these results generalize to (real) Coxeter groups of all types \cite{barcelo-severs-white}.

Inspired by these results we computed the discrete fundamental group of other polyhedra, and in particular, that of the associahedron.
 Our main result is the following:
\begin{theorem}
\label{thm:intro2}
$A_1^{n-2}(\TN)^{ab}$ is a free abelian group with a minimal generating set of size $\binom{n+2}{4}$, where $A_1^{n-2}(\TN)^{ab}$ is the abelianization of the discrete fundamental group of $\TN$.
\end{theorem}
We actually derive a combinatorial description of the generating set mentioned in Theorem \ref{thm:intro2}, in terms of $5$-sets $S$ of $[n+3]$ such that $1 \in S$.

Encouraged by the fact that the discrete fundamental group of the Permutohedron is the (classical) fundamental group of a subspace arrangement, 
we sought to find a relationship between the discrete fundamental group of the associahedron and the type $A$ cluster algebra. The motivation for studying cluster algebras comes from the fact that the type $A$ cluster algebra and the 
associahedron are intrinsically related. We discovered that the discrete fundamental group of the associahedron arises in the study of an abelian group which we call the \emph{exchange module} $E(\mathcal{A}_n)$. 
While we do not believe that $E(\mathcal{A}_n)$ is known yet in the literature, we are hopeful that it may find application in cluster algebras, or one the other various places associahedra arise.

A cluster algebra, introduced by Fomin and Zelevinksy in \cite{fomin-zel-cluster}, is an axiomatically defined commutative ring equipped with a subset of generators, and a process for creating the rest of the generators via 
\emph{mutation}.
Though developed in the context of representation theory, cluster algebras have found applications in
discrete dynamical systems based on rational recurrences \cite{fomin-zel-laurent}, $Y$-systems in thermodynamic Bethe Ansatz \cite{fomin-zel-y}, grassmannians 
and their tropical analogues \cite{scott-grass}, Poisson geometry and Teichm\"uller theory \cite{fock-goncharov}. For more information about cluster algebras 
we refer the reader to the Park City notes by Fomin and Reading \cite{fomin-reading}, the CDM '03 notes by Fomin and Zelevinsky \cite{fomin-zel-cdm}, 
the original series of cluster algebra papers by Fomin and Zelevinsky \cite{fomin-zel-cluster,fomin-zel-finite,fomin-zel-y} and of course the comprehensive 
Cluster Algebras Portal maintained by Fomin at \url{http://www.math.lsa.umich.edu/~fomin/cluster.html}.

We are interested in understanding the relationships among clusters for cluster algebras of type A. One way of doing so is to study the
(discrete) fundamental group of the exchange graph of the associahedron; loops represent ways of obtaining the same cluster after a sequence of mutations. 
Since mutations involving disjoint quadrilaterals are easy to understand, it makes sense to add 2-cells to the exchange graph. As a result, the discrete fundamental group models 
the relationships among the exchange relations, modulo the obvious commutativity relations.

We are able to describe the abelianization of the discrete fundamental group for the associahedron in terms of 
the cluster algebra of type $A$. The resulting abelian group we refer to as the exchange module $E(\mathcal{A}_n)$. 
We are able to 
prove the following theorem:
\begin{theorem}
$E(\mathcal{A}_n)$ is free abelian of rank $\binom{n+2}{3}$. 

\end{theorem}

\begin{figure}[htbp]

\begin{center}
\scalebox{.5}{\includegraphics{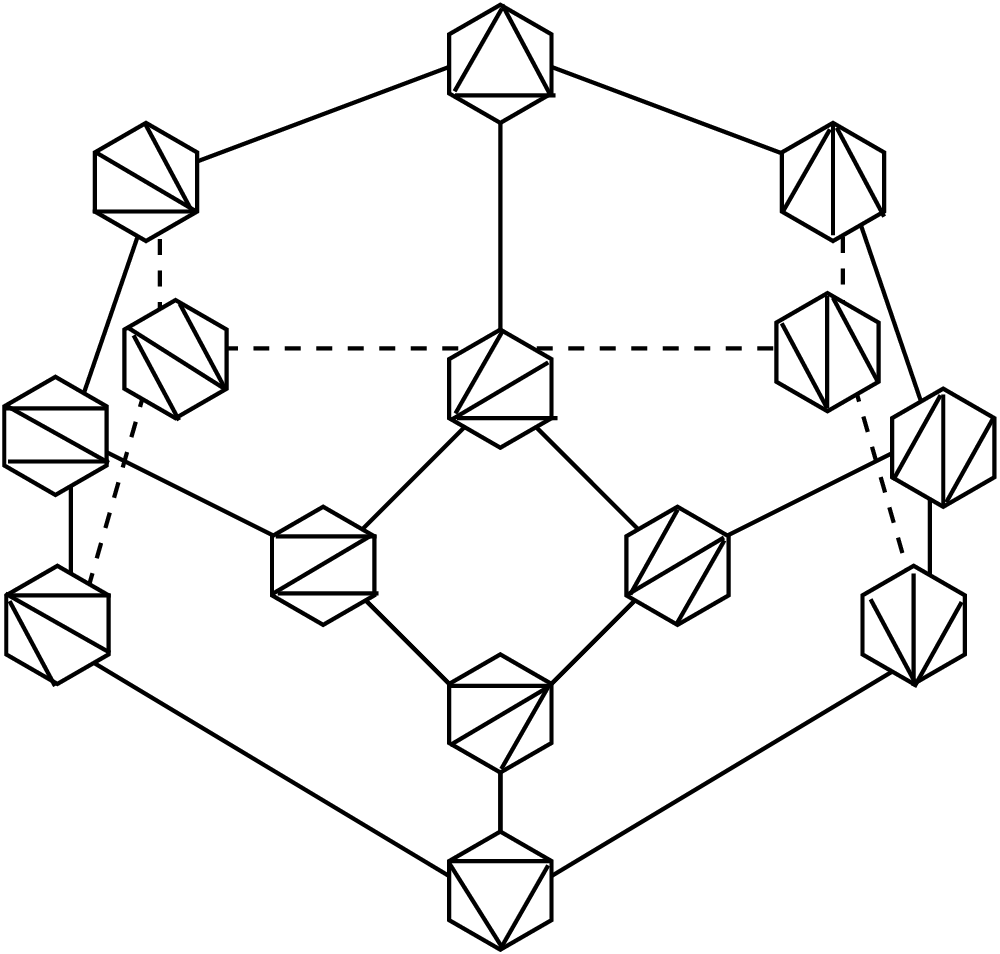}}

\caption{The 3-dimensional associahedron, with vertices labeled by triangulations.}
\label{fig:3dasc}
\end{center}
\end{figure}

We start with a review of discrete homotopy theory. Next, we study the discrete homotopy group of $\TN$ in Sections \ref{sec:labels} and \ref{sec:maintheorem}. 
In Section \ref{sec:maintheorem} we prove Theorem \ref{thm:intro2}.  In Section \ref{sec:review}, we review terminology from cluster algebras and define the exchange module. The material in that section does not depend on the rest of the paper; 
the reader may feel free to read Section \ref{sec:review} first to learn the definition of the exchange module. In Section \ref{sec:cluster}, we 
study the exchange module in more detail, using our results about the discrete fundamental group of the associahedron.
We conclude with some remarks about applying our methods to various generalizations of the associahedron.

\section{Discrete Homotopy Theory} \label{sec:atheory}

All of the definitions and theorems in this section are from \cite{bklw}. 

Discrete homotopy theory is a tool to study the combinatorial connectedness of a simplicial complex $\Delta$. 
Given an integer parameter $0 \le q \le dim(\Delta)-1$, then two simplices $\sigma$ and $\tau$ are \emph{$q$-near} if they share a $q$-face. 
A \emph{$q$-chain} is a sequence \[ (\sigma_0, \sigma_1, \ldots, \sigma_k) \] of simplices such that $\sigma_i$ and $\sigma_{i+1}$ are $q$-near 
for $ \le i \le k-1$. Given a fixed simple $\sigma_0$, a $q$-chain that starts and ends with $\sigma_0$ is a \emph{$q$-loop based at $\sigma_0$}. 
Two simplices are \emph{$q$-connected} if there is a $q$-chain between them. 

\begin{definition}
 \label{def:ahomotopic}
Define an equivalence relation $\A$ on the set of all $q$-loops with a common base simplex $\sigma_0$ in the following way:
\begin{enumerate}

 \item  
$(\sigma_0, \sigma_1, \ldots, \sigma_k, \sigma_0) 
 \A (\sigma_0, \sigma_1, \ldots, \sigma_k, \sigma_0, \sigma_0)$

\item $(\sigma) \A (\tau)$ if they are of the same length and there is a grid between them as in Fig. \ref{fig:grid}, where each row of the grid is a $q$-loop, and each column is a $q$-chain.
\begin{figure}[htbp]

\begin{center}
\scalebox{.5}{\input{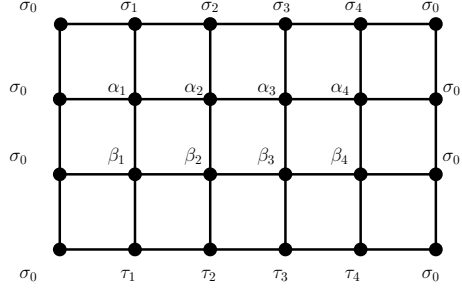}}
\caption{A grid between $(\sigma)$ and $(\tau)$. Edges indicate simplices are $q$-near.}
\label{fig:grid}
\end{center}
\end{figure}

\end{enumerate}

\end{definition}

\begin{theorem}[Proposition 2.3 in \cite{bklw}]
\label{thm:group}
 The collection of all equivalence classes of $q$-loops in $\Delta$ based at $\sigma_0$ forms a group, denoted $A_1^q(\Delta,\sigma_0)$. The group operation induced by concatenation of loops. 
The identity is the equivalence class of constant loop $\sigma_0$.  The inverse of $[\sigma]$ is the class $[\sigma^{-1}]$, where $\sigma^{-1}$ is the loop $\sigma$ read in reverse order. 
 \end{theorem}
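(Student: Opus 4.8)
The plan is to verify the group axioms directly for concatenation of $\A$-classes, the only substantive points being that concatenation descends to classes and that inverses exist. First I would fix conventions: a $q$-loop based at $\sigma_0$ is a finite $q$-chain $\sigma_0 - \sigma_1 - \cdots - \sigma_{k-1} - \sigma_0$ (with $k = 0$ allowed, giving the constant loop $(\sigma_0)$), and $(\sigma)\cdot(\tau)$ is formed by concatenating the two chains and identifying the terminal $\sigma_0$ of $(\sigma)$ with the initial $\sigma_0$ of $(\tau)$; this is again a $q$-chain because the last interior simplex of $(\sigma)$ is $q$-near $\sigma_0$, and $\sigma_0$ is $q$-near the first interior simplex of $(\tau)$. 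With this convention, concatenation of \emph{representatives} is literally associative — both bracketings of $(\sigma)\cdot(\tau)\cdot(\rho)$ yield the same single juxtaposed chain — and $(\sigma_0)$ is a literal two-sided unit, $(\sigma_0)\cdot(\sigma) = (\sigma) = (\sigma)\cdot(\sigma_0)$. So once concatenation is known to be well defined on $\A$-classes, associativity and the identity axiom come for free.

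The first real step is to show $\A$ is a congruence: if $(\sigma)\A(\sigma')$ and $(\tau)\A(\tau')$, then $(\sigma)\cdot(\tau)\A(\sigma')\cdot(\tau')$. By symmetry and transitivity of $\A$, it suffices to treat one factor at a time and to assume $(\sigma')$ arises from $(\sigma)$ by a single generating move. A move of type~(1) in $(\sigma)$ is simultaneously a move of type~(1) in $(\sigma)\cdot(\tau)$, so that case is immediate. For a move of type~(2), take the grid between $(\sigma)$ and $(\sigma')$ — every row is a loop based at $\sigma_0$, so its first and last columns are the constant column $\sigma_0$ — and glue to its right the constant grid on $(\tau)$, the array all of whose rows equal $(\tau)$ (vertically adjacent entries are equal, hence $q$-near, and the left column is the constant column $\sigma_0$). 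The two arrays agree along the shared column, and the result is a grid between $(\sigma)\cdot(\tau)$ and $(\sigma')\cdot(\tau)$. The mirror argument on the left factor completes the congruence. This is also the one place where the ``same length'' requirement inside a type-(2) move matters, and organizing the proof as a reduction to single generating moves is precisely what keeps the length bookkeeping trivial.

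The remaining step, and the only one needing an explicit construction, is the existence of inverses: $(\sigma)\cdot(\sigma^{-1})\A(\sigma_0)$ for $\sigma = v_0 - v_1 - \cdots - v_k$ with $v_0 = v_k = \sigma_0$. I would exhibit a grid with $k+1$ rows, each of length $2k$, in which row $t$ is the ``fold-back'' loop that runs out along $\sigma$ as far as $v_{k-t}$ and then immediately back, padded by repeating the turning simplex $v_{k-t}$ enough times to keep the length equal to $2k$. Then row $0$ is $(\sigma)\cdot(\sigma^{-1})$; row $k$ is a constant loop, hence $\A(\sigma_0)$ by type-(1) moves; and passing from row $t$ to row $t+1$ changes entries only inside a bounded window around the turn, where corresponding entries are either equal or form the consecutive pair $v_{k-t}, v_{k-t-1}$ of $\sigma$ and hence are $q$-near — which is exactly what a grid requires. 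Reading the same grid from bottom to top (or applying the result to $\sigma^{-1}$) gives $(\sigma^{-1})\cdot(\sigma)\A(\sigma_0)$.

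I expect the congruence step to be the main obstacle, not because any single verification is hard but because it is where one must be disciplined about the length conventions in the two types of moves; the inverse grid is routine once set up, and associativity and the identity are then formal consequences of the representative-level identities recorded above.
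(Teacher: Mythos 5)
Your proof is correct. Note, though, that the paper does not prove this statement at all: Theorem \ref{thm:group} is imported verbatim from \cite{bklw} (the section opens by saying all results there are taken from that reference), so there is no in-paper argument to compare against; your write-up is essentially the standard direct verification one finds in the source. The two substantive points are handled properly: reducing well-definedness to a single generating move and gluing the given grid to a constant grid on the other factor is exactly right (and correctly isolates where the equal-length requirement of type-(2) moves matters), and the fold-back grid for $(\sigma)\cdot(\sigma^{-1})\A(\sigma_0)$ checks out entry by entry, with vertically adjacent entries either equal or a consecutive pair of $\sigma$. The only convention worth stating explicitly is that every simplex in a $q$-loop has dimension at least $q$, so a simplex is $q$-near itself; this is what legitimizes the repeated (padding) entries within rows and the equal vertical pairs in your grids, and it is implicit in the paper's move (1).
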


The group $A_1^q(\Delta,\sigma_0)$ is the $q$-discrete fundamental group of $\Delta$. We shall also call it the discrete fundamental group when $q$ is fixed. There are higher discrete 
homotopy groups as well; we refer the interested reader to \cite{bklw} for more information about them. 

In practice we suppress the base point from the $q$-discrete fundamental group notation, since Proposition 2.4 of \cite{bklw} states that if a complex $\Delta$ is 
$q$-connected and $\sigma_0$ and $\tau_0$ are maximal simplices, then $A_1^q(\Delta,\sigma_0) \cong A_1^q(\Delta,\tau_0)$.

One way to understand $A_1^q(\Delta,\sigma_0)$ is to consider an associated object called the connectivity graph. 

\begin{definition}
 \label{def:gammagraph}
The connectivity graph $\Gamma^q_{max}(\Delta)$ is the graph whose nodes are the maximal simplices of $\Delta$. 
There is an edge between node $\sigma$ and node $\tau$ if the simplices $\sigma$ and $\tau$ are $q$-near. 
\end{definition}

Closed walks in $\Gamma^q_{max}(\Delta)$ based at $\sigma_0$ may be viewed as $q$-loops in $\Delta$ based at $\sigma_0$. It turns out that every $q$-loop is discrete homotopy equivalent to a loop corresponding to a closed walk in $\Gamma^q_{max}(\Delta)$.
 We will use loops and closed walks interchangeably because of this correspondence. 

The graph $\Gamma^q_{max}(\Delta)$ provides a link between the $q$-discrete fundamental group of $\Delta$ and the fundamental group, $\pi_1$, of a related topological space.

\begin{theorem}[\cite{bklw}, Theorem 5.16]
 \label{thm:gammacomplex}
$A_1^q(\Delta,\sigma_0) \simeq \pi_1(X_{\Gamma^q_{max}(\Delta)}, \sigma_0)$, where $X_{\Gamma^q_{max}(\Delta)}$ is the regular 2-cell complex obtained by attaching a 
2-cell along the boundary of every 3- and 4-cycle of the graph $\Gamma^q_{max}(\Delta)$, and we use $\sigma_0$ to denote both a simplex in $\Delta$ and the corresponding node in $\Gamma$.
\end{theorem}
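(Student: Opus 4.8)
The plan is to build the isomorphism by hand, routing everything through closed edge-walks in the graph $\Gamma := \Gamma^q_{max}(\Delta)$, which by construction is the $1$-skeleton of $X := X_{\Gamma^q_{max}(\Delta)}$. A $q$-loop $(\sigma) = \sigma_0 - \sigma_1 - \cdots - \sigma_m$ based at $\sigma_0$ determines a closed edge-walk $w(\sigma)$ in $\Gamma$ based at the node $\sigma_0$ (a step with $\sigma_i \ne \sigma_{i+1}$ traverses the edge $\sigma_i\sigma_{i+1}$, a step with $\sigma_i = \sigma_{i+1}$ is a constant ``stay''), and conversely every closed walk arises this way, as already observed in the text. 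Since $\Gamma = X^{(1)}$, the walk $w(\sigma)$ carries a class $[w(\sigma)] \in \pi_1(X, \sigma_0)$, and I would define $\Psi \colon A_1^q(\Delta, \sigma_0) \to \pi_1(X, \sigma_0)$ by $\Psi\bigl([\sigma]_{\A}\bigr) = [w(\sigma)]$.

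The first thing to verify is that $\Psi$ is well defined, i.e.\ that the two generating moves of $\A$ leave $[w(\sigma)]$ unchanged. Move~(1) only inserts a stay, hence is invisible in $\pi_1(X)$. For move~(2) it is enough to treat a single elementary step of a grid: $q$-loops $\lambda$ and $\mu$ of equal length $m$ with $\lambda_0 = \mu_0 = \sigma_0 = \lambda_m = \mu_m$ and $\lambda_i$ $q$-near $\mu_i$ for every $i$. Here I would run the standard ``ladder'' homotopy: for each $i$ the closed walk $\lambda_i - \lambda_{i+1} - \mu_{i+1} - \mu_i - \lambda_i$ has length at most $4$ (its four nodes are pairwise adjacent in $\Gamma$ by the loop conditions on $\lambda,\mu$ together with the vertical $q$-nearness furnished by the grid), so it bounds an attached $2$-cell of $X$, or is degenerate and hence a spur; sliding $w(\lambda)$ across these squares one index at a time gives a homotopy rel $\sigma_0$ from $w(\lambda)$ to $w(\mu)$. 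Thus $\Psi$ descends to $A_1^q(\Delta,\sigma_0)$. It is a homomorphism because concatenation of $q$-loops corresponds to concatenation of walks, hence to multiplication in $\pi_1(X)$, and it is surjective because $\pi_1(X)$ is a quotient of the free group $\pi_1(\Gamma,\sigma_0)$, which is generated by classes of closed edge-walks at $\sigma_0$, each of the form $w(\sigma)$.

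Injectivity is the heart of the matter and the step I expect to cause the most trouble. Suppose $\Psi\bigl([\sigma]_{\A}\bigr) = 1$, so that $w(\sigma)$ is null-homotopic in $X$. By the standard van Kampen description of the fundamental group of a $2$-complex, $w(\sigma)$ can be carried to the constant walk at $\sigma_0$ by finitely many elementary moves, each being either (a) insertion or deletion of a spur $u - v - u$ along an edge $uv$ of $\Gamma$, or (b) insertion or deletion, at some node of the current walk, of the boundary cycle of an attached $2$-cell --- that is, of a $3$-cycle or a $4$-cycle. It then suffices to check that each such move, reinterpreted as an operation on $q$-loops, is an $\A$-equivalence, for then $[\sigma]_{\A}$ equals the class of the constant loop. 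A spur is immediate: $(\dots,u,v,u,\dots) \A (\dots,u,u,u,\dots)$ by a one-row grid changing the middle entry (legal since $u$ is $q$-near $v$ and the new row is a genuine $q$-loop), after which move~(1) removes the repeats. A $3$-cycle $u - v - w - u$ is just as easy because its three nodes are pairwise $q$-near: $(\dots,u,u,u,u,\dots) \A (\dots,u,v,w,u,\dots)$ in a single grid step.

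The genuinely delicate case is inserting a $4$-cycle $u - v - w - z - u$, because $w$ need not be $q$-near $u$ and $v$ need not be $q$-near $z$, so no single grid step turns $(\dots,u,u,u,u,u,\dots)$ into $(\dots,u,v,w,z,u,\dots)$. I would handle it with a short auxiliary ``crossing'' lemma: doubling the endpoints of the affected stretch and exploiting only the $q$-nearness coming from the \emph{edges} of the $4$-cycle, one shows $(\dots,P,Q,R,\dots) \A (\dots,P,S,R,\dots)$ whenever $P - Q - R - S - P$ is a $4$-cycle of $\Gamma$; applying this inside the closed walk $(u,v,w,z,u)$ turns it into $(u,v,u,z,u)$, a concatenation of two spurs, which collapses to $(u)$ by the spur case. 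Since every one of these moves localizes to the affected stretch of a longer loop (the untouched entries being trivially $q$-near themselves), spur- and cell-insertions on $w(\sigma)$ correspond exactly to $\A$-moves, so $\Psi$ is an isomorphism. What remains is bookkeeping --- writing the ladder and crossing homotopies as honest grids and checking that every intermediate row is a legal $q$-loop based at $\sigma_0$ --- which is routine but is where essentially all the work lies.
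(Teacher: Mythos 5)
The paper does not actually prove this statement---it is quoted from \cite{bklw}---but your argument is a correct, self-contained proof along essentially the same lines as the original: the bijection between $q$-loops and closed walks in $\Gamma^q_{max}(\Delta)$, grids realized as ladders of (possibly degenerate) 3- and 4-cycles, the edge-path description of null-homotopy in a 2-complex for injectivity, and the doubling/stretching trick to realize insertion of a 4-cycle whose diagonal pairs need not be $q$-near as a legitimate grid move. One small wording slip: in the ladder step the four nodes $\lambda_i,\lambda_{i+1},\mu_{i+1},\mu_i$ are not in general pairwise adjacent, only consecutively adjacent around the square, which is all you need for the boundary walk to be a 3- or 4-cycle (or degenerate) bounding an attached cell.
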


Thus, two loops $(\sigma)$ and $(\tau)$ are homotopic if and only if their associated walks in $\Gamma^q_{max}(\Delta)$ differ by 3- and 4-cycles only. In fact, a homotopy grid can be constructed by stretching the walk $(\sigma)$ around individual 3- and 4-cycles to obtain the sequence of loops that make up the rows of the grid. 

In this paper we are concerned with the abelianization of the discrete fundamental group. From the isomorphism in Theorem \ref{thm:gammacomplex}, it follows that, if
$X_{\Gamma^q_{max}(\Delta)}$ is connected,  then the abelianization $A_1^q(\TN)^{ab}$ is isomorphic to the first homology group of $X_{\Gamma^q_{max}(\Delta)}$, $H_1(X_{\Gamma^q_{max}(\Delta)})$.

While this review contains information about discrete homotopy theory that we need, it is not comprehensive. We refer the interested reader to \cite{bbdl} for more details and a category theoretic treatment of this material. 

\section{Discrete Homotopy Classes of $\TN$} \label{sec:labels}

The object we apply discrete homotopy theory to is the cluster complex $\TN$. The vertices of $\TN$ are diagonals in a regular polygon $P$ on $n+3$ vertices. Simplices of $\TN$ are sets of pairwise non crossing diagonals. Hence, maximal simplices in this complex correspond to triangulations of $P$. Thus,  $\Gamma=\Gamma^{n-2}_{max}(\TN)$ is the graph whose nodes are triangulations of $P_{n+3}$ and whose edges 
corresponding to diagonal flips. This graph is the $1$-skeleton of the \emph{associahedron}.  

For our paper, cycles of a graph $\Gamma$ come equipped with one of two possible orientations. That is, a cycle based at $v_0$ is a sequence $C = (v_0, \ldots, v_k)$ of vertices, such that $v_i$ is adjacent to $v_{i+1}$ for all $i$, $v_k = v_0$, and $v_1, \ldots, v_{k-1}$ are all distinct. In particular, $-C = (v_k, v_{k-1}, \ldots, v_1, v_0)$ is regarded as a different cycle.

Consider the cycles in $\Gamma$ that bound $2$-cells of the associahedron. We use the convention of Fomin, Shapiro and Thurston in 
\cite{fomin-shapiro-thurston} and refer to these cycles as \emph{geodesic cycles}. Recall that $2$-cells of the associahedron correspond to codimension 2 simplices of $\TN$, which give a polygonal dissection of $P_{n+3}$ into $n-2$ 
triangles and one pentagon, or $n-3$ triangles and $2$ quadrilaterals. 

\begin{proposition}
\label{prop:4and5cycles}
Let $T \in \TN$ be of codimension 2. If $T$ leaves two quadrilaterals inside $P_{n+3}$ untriangulated then $T$ corresponds to a geodesic 4-cycle. 
Otherwise, $T$ corresponds to a geodesic 5-cycle.  
\end{proposition}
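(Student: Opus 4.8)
The plan is to analyze the local structure around the two missing diagonals in a codimension-$2$ simplex $T$, and count the ways to complete $T$ to a full triangulation in each of the two cases. Recall that a codimension-$2$ simplex $T$ consists of $n-2$ noncrossing diagonals, so the corresponding polygonal dissection of $P_{n+3}$ has $n-2$ diagonals and hence (by Euler's formula, or by induction on dissections) exactly $n+1 - (n-2) = 3$ cells that are not triangles in total degree — more precisely, the dissection has $n+1$ cells if it were a triangulation, and removing $k$ diagonals merges cells; removing $2$ noncrossing diagonals from a triangulation either merges two disjoint pairs of triangles into two quadrilaterals, or merges three triangles sharing a common structure into a single pentagon. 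So first I would make this dichotomy precise: the two removed diagonals are either (a) non-adjacent, in the sense that no triangle of a completing triangulation has both as edges — equivalently, they do not bound a common region — giving two quadrilateral holes $Q_1, Q_2$; or (b) adjacent, bounding a common pentagonal hole $\Pi$. This is genuinely a case split on whether the (unique, up to completion) quadrilaterals around the two flips overlap.

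Next, in case (a): the neighbours of $T$ in $\Gamma = \Gamma^{n-2}_{max}(\TN)$ are obtained by adding back one diagonal of $Q_1$ or one diagonal of $Q_2$. Each quadrilateral has exactly two diagonals, so there are $2 + 2 = 4$ maximal simplices adjacent to $T$ in the sense of being $(n-2)$-near to it; call them $T_1, T_2$ (triangulating $Q_1$) and $T_3, T_4$ (triangulating $Q_2$). I claim these four triangulations, together with the edges of $\Gamma$ joining consecutive ones, form a $4$-cycle: $T_1$ and $T_3$ are $(n-2)$-near (they share the $(n-2)$-face $T$'s... wait, more carefully: $T_1 \cap T_3 = T$ which has dimension $n-3$, so they are $(n-3)$-near; I must instead observe that $T_1$ and $T_3$ differ by a single diagonal flip inside $Q_1$, since $T_3$ has the "other" diagonal of $Q_1$ relative to... no). The correct statement is: $T_1$ and $T_2$ differ by the flip inside $Q_1$ (so they are $(n-2)$-near and joined by an edge labeled $L(Q_1)$), and likewise $T_3, T_4$ by the flip inside $Q_2$; and $T_1$ (say, the one with diagonal $d_1 \in Q_1$ and needing a diagonal in $Q_2$) is itself a codimension-$1$ simplex, hence not a node of $\Gamma$. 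So the right picture is: $T$ corresponds to a cycle in $\Gamma$ whose nodes are the $2 \times 2 = 4$ full triangulations obtained by picking one diagonal of $Q_1$ and one of $Q_2$, and whose edges are the four flips (flip in $Q_1$ with $d_2$ fixed, flip in $Q_1$ with $d_2'$ fixed, flip in $Q_2$ with $d_1$ fixed, flip in $Q_2$ with $d_1'$ fixed) — this is visibly a $4$-cycle, which is the geodesic $4$-cycle. I would spell out that these are all and the only closed walks "around $T$", matching the convention of Fomin–Shapiro–Thurston.

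Then case (b): the two removed diagonals bound a common pentagonal hole $\Pi$ with vertices $v_1 < v_2 < v_3 < v_4 < v_5$. A pentagon has exactly five diagonals and exactly five triangulations, each using two of them, and the "flip graph" of a pentagon's triangulations is a $5$-cycle — this is the classical base case of the associahedron, essentially the $A_2$ example already worked out in the introduction. Completing $T$ to a full triangulation of $P_{n+3}$ amounts exactly to choosing one of these five triangulations of $\Pi$ (the part outside $\Pi$ is already triangulated), and two completions are joined by an edge of $\Gamma$ iff the corresponding pentagon triangulations differ by a flip. Hence $T$ corresponds to the geodesic $5$-cycle. I would close by noting these two cases are exhaustive: a codimension-$2$ dissection has exactly two non-triangular "degrees of freedom", and a single untriangulated region with four added-back-diagonal... the region left untriangulated, being a union of $3$ triangles glued along shared edges with $2$ internal diagonals missing, is either disconnected (two quadrilaterals) or connected (one pentagon) — there is no other option since a hexagonal or larger hole would require $\ge 3$ missing diagonals.

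\medskip

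The main obstacle I expect is the bookkeeping in case (a): carefully justifying that the four completions of $T$ genuinely close up into a $4$-cycle in $\Gamma$ (rather than, say, a path or a disconnected pair of edges), and that these are exactly the edges of $\Gamma$ incident to "the simplex $T$" in the appropriate sense. This requires being precise about the difference between "$(n-2)$-near" (an edge of $\Gamma$, i.e. a single diagonal flip) and "shares the codimension-$2$ face $T$", and observing that flipping inside $Q_1$ commutes with flipping inside $Q_2$ because the quadrilaterals are diagonal-disjoint. Once that commutation is pinned down, both cases reduce to the elementary fact that the flip graph of an $m$-gon's triangulations for $m=4$ is an edge (a "$2$-cycle" / degenerate) and for $m=5$ is a pentagon, applied to the local holes $Q_i$ and $\Pi$.
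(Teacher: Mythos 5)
Your proposal is correct and follows essentially the same route as the paper: in the two-quadrilateral case the $2\times 2$ completions (one diagonal per quadrilateral) bound a geodesic 4-cycle, and in the pentagon case the five triangulations of the pentagonal hole bound a geodesic 5-cycle. The paper's proof is just a terser version of this counting argument; your extra care about the commuting flips and the exhaustiveness of the dichotomy is fine but not needed beyond what you wrote.
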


\begin{proof}
Suppose that two quadrilaterals are left untriangulated. There are four ways to triangulate these regions, resulting in the four nodes of $\Gamma$ that bound a geodesic 4-cycle. 

If there are no two untriangulated quadrilaterals, then there is one untriangulated pentagon.  There are five ways to triangulate this region, resulting in five nodes of $\Gamma$ bounding a geodesic 5-cycle. 
  \end{proof}

Another useful fact is the following, due to Fomin et al.
\begin{proposition}[Theorem 3.10 in \cite{fomin-shapiro-thurston}]
Fix a base point $v_0 \in \Gamma$.
 $\pi_1(\Gamma)$ is generated by all based loops of the form $PCP^{-1}$, where $P$ is a path from $v_0$ to some $v_i$, and $C$ is a geodesic cycle.
\end{proposition}
By Theorem \ref{thm:gammacomplex}, there is a 2-dimensional cell complex $X_{\Gamma}$ such that $A_1^{n-2}(\TN) \cong \pi_1(X_{\Gamma})$. Moreover, $X_{\Gamma}$ is 
constructed by attaching 2-cells to every geodesic 4-cycles of $\Gamma$. Thus, loops of the form $PCP^{-1}$, where $C$ is a $4$-cycle, are contractible in $X_{\Gamma}$. 
\begin{corollary}
\label{cor:a1}
Fix a maximal simplex $\sigma_0 \in \TN$.
 Then $A_1^{n-2}(\TN)$ is generated by homotopy classes $[\sigma \tau \sigma^{-1}]$, where $\sigma$ is a $(n-2)$-chain starting at $\sigma_0$, and $\tau$ is a $(n-2)$-loop corresponding to a geodesic 5-cycle in $\Gamma$.
\end{corollary}
To understand $A_1^{n-2}(\TN)$ and its abelianization, it is enough to understand equivalence classes coming from geodesic $5$-cycles.

The goal of the remainder of this section is to define an edge-labeling of $\Gamma$, 
and use the labeling to construct a homomorphism $\tilde{\psi}: A_1^{n-2}(\TN) \to \Lambda$, where $\Lambda$ is a free abelian group with basis given by the edge labels. 
Now we proceed to define an edge-labeling of $\Gamma$.
\begin{definition}
\label{def:edgelabel}
 Let $E$ be an edge in $\Gamma$ with corresponding diagonal flip occurring in a quadrilateral (of $P_{n+3}$) whose boundary vertices are $a,b,c,d$. 
We define the label of $E$, $L(E)$ to be the set $\{a,b,c,d \}$. Let $\Lambda$ be the free $\mathbb{Z}$-module with basis given by all sets $\{a,b,c,d \} \subset [n+3]$.
\end{definition}

The same label will be applied to many edges in $\Gamma$. 
An important fact about the edge labels of geodesic 4-cycles is that opposite edges have the same label.

In order to define $\tilde{\psi}$, we need to orient the edges of $\Gamma$.
Suppose we obtain a triangulation $T'$ from a triangulation $T$ by removing a diagonal $\alpha$ and replacing it with a diagonal $\beta$. Suppose $\alpha$ has endpoints $a < b$ and $\beta$ has endpoints 
$c < d$. If $a < c$, then define $\epsilon(T, T') = 1$. Otherwise, let $\epsilon(T, T') = -1$.

\begin{definition}
 Fix a base node $T_0$ in $\Gamma$. Let $(T)=(T_0, T_1, \ldots, T_k, T_0)$ be an $(n-2)$-loop (hereafter just a loop). 
\begin{enumerate}
 \item Let \begin{equation} \psi((T)) = \sum_{i=0}^k \epsilon(T_i, T_{i+1}) L(T_i T_{i+1})\label{eq:definepsi} \end{equation} where $\epsilon(T_i, T_{i+1}) = 0$ whenever $T_i = T_{i+1}$.
\item Given a homotopy class $[T]$, with representative loop $(T)$, let $\tilde{\psi}([T]) = \psi((T))$.
\end{enumerate}

\end{definition}
We show that $\tilde{\psi}: A_1^{n-2}(\TN) \to \Lambda$ is a well-defined homomorphism. As a first step, we show that, for any loop 
$(\sigma) = (\sigma_0, \sigma_1, \sigma_2, \sigma_3, \sigma_4, \sigma_0)$, we have $\psi((\sigma)) = 0$, where $\sigma_0$ is not required to be the base simplex. 
For simplicity, we shall only discuss two examples of such loops $(\sigma)$.
One interesting case is a situation of the form $(\sigma_0, \sigma_0, \sigma_1, \sigma_2, \sigma_0)$. Then $(\sigma_0, \sigma_1, \sigma_2)$ is a 3-cycle 
in $\Gamma$. Since $\Gamma$ is triangle-free, this case does not occur. The other interesting case is when the loop is of the form $(\pi) = (\pi, \rho, \sigma, \tau, \pi)$, where all simplices are 
distinct. This corresponds to a 4-cycle in $\Gamma$. Moreover, if $\rho$ is obtained from $\pi$ by replacing a diagonal $\alpha$ with a diagonal $\beta$, then $\tau$ is obtained from $\sigma$ by replacing 
$\beta$ with $\alpha$. In particular, these two edges have the same label, and opposite sign. Similarly, $(\rho, \sigma)$ and $(\tau, \pi)$ get the same label, but opposite sign. Hence, even in 
this case, $\psi((\pi)) = 0$.

\begin{proposition}
Let $\tilde{\psi}: A_1^{n-2}(\TN) \to \Lambda$ be given by $\tilde{\psi}([\sigma]) = \psi((\sigma))$ for any representative $(\sigma)$. Then $\tilde{\psi}$ is a well-defined homomorphism.
\end{proposition}
\begin{proof}
 We show the map $\tilde{\psi}$ is well-defined. 
 
 Consider two loops $(\sigma), (\tau)$ such that $(\sigma) \simeq_A (\tau)$. 
Let $(\sigma) = (\sigma_0, \ldots, \sigma_i, \sigma_0)$, and $(\tau) = (\tau_0, \ldots, \tau_r, \tau_0)$. 
Let $(\sigma') = (\sigma_0, \ldots, \sigma_i, \sigma_0, \sigma_0)$. Then $\psi((\sigma')) = \psi((\sigma))$. Thus, without loss of generality, we 
may assume $(\sigma)$ and $(\tau)$ are of the same length $r$, and there exists a discrete homotopy grid from $(\sigma)$ to $(\tau)$. 

As a first step, assume that $\sigma_i$ is $(n-2)$-near to $\tau_i$ for all $i$.
\begin{displaymath}
 \begin{array}{ccc}
  \psi((\rho)) - \psi((\tau)) & = &  \sum_{i=0}^r (\epsilon(\sigma_i, \sigma_{i+1}) L(\sigma_i \sigma_{i+1}) - \epsilon(\tau_i, \tau_{i+1}) L(\tau_i \tau_{i+1})) \\
& = & \sum_{i=0}^r (\epsilon(\sigma_i,\sigma_{i+1}) L(\sigma_i \sigma_{i+1}) + \epsilon(\sigma_{i+1}, \tau_{i+1}) L(\sigma_{i+1} \tau_{i+1}) \\
&  & -\epsilon(\tau_i, \tau_{i+1}) L(\tau_i \tau_{i+1}) - \epsilon(\sigma_i, \tau_i) L(\sigma_i \tau_i)) \\
& = & \sum_{i=0}^r \psi((\pi)^i) \\
& = & 0
\end{array}
\end{displaymath} 
where the last equality comes from the fact that $\psi((\sigma)) = 0$ for any loop of length four.

Finally, suppose $(\sigma)$ and $(\tau)$ are the same length, and consider a discrete homotopy grid from $(\sigma)$ to $(\tau)$. Let $(\rho)^0, \ldots, (\rho)^m$ 
denote the loops formed by the rows of the homotopy grid. Then by our previous argument, $\psi((\rho)^0) = \psi((\rho)^1) = \cdots = \psi((\rho)^m)$.  

Finally, it is not hard to show that the map $\tilde{\psi}$ is a homomorphism.
  \end{proof}

\section{A Calculation of $A_1^{n-2}(\TN)^{ab}$.}\label{sec:maintheorem}

In this section, we focus on $A_1^{n-2}(\TN)^{ab}$. By Corollary \ref{cor:a1} in the last section, $A_1^{n-2}(\TN)$ is generated by all homotopy classes of the form 
$[\sigma \tau \sigma^{-1}]$, where $\sigma$ was a $(n-2)$-chain starting at the fixed base point, and $\tau$ was a $(n-2)$-loop corresponding to a geodesic $5$-cycle. 
Fix a base simplex $T_0$. Given two 5-cycles $C$ and $C'$, we say $C \simeq C'$ if there exists paths $P$ and $Q$ such that $PCP^{-1}$, $QC'Q^{-1}$ are loops, and $PCP^{-1} \simeq_A QC'Q^{-1}$.
Let $\rho: A_1^{n-2}(\TN) \to A_1^{n-2}(\TN)^{ab}$ be the projection map. Then $\rho([PCP^{-1}]) = \rho([QC'Q^{-1}])$. Then $A_1^{n-2}(\TN)^{ab}$ is generated by all equivalence classes of geodesic 5-cycles, under the equivalence relation $\simeq$. 
Our first goal is to understand when two geodesic 5-cycles are representatives of the same element of $A_1^{n-2}(\TN)^{ab}$. To this end, we introduce labels and 
orientations for geodesic 5-cycles.

\begin{definition}
\label{def:cyclelabel}
 Let $C$ be a geodesic 5-cycle in $\Gamma$ with an untriangulated region (of $P_{n+3}$) bounded by $a,b,c,d,e$. The label of $C$, $L(C)$ is the set $\{a,b,c,d,e \}$. 
\end{definition}

\begin{remark}
 \label{rem:edgeunion}
Let $C$ be a geodesic 5-cycle in $\Gamma$ with edges $E_1, \ldots, E_5$. Then:
\begin{enumerate}
\item $L(C) = \bigcup_{i=1}^5 L(E_i)$. 

\item $L(E_i) \neq L(E_j)$ for $i \neq j$. 
\end{enumerate}
\end{remark}

Let $C$ be a geodesic 5-cycle, with label set $\{a,b,c,d,e \}$, with $a < b < c < d < e$. Let $T_i, T_{i+1}$ be the vertices of $C$ such that $L(T_i T_{i+1}) = \{b,c,d,e \}$.
Then define $\epsilon(C) = \epsilon(T_i, T_{i+1})$. This defines an orientation function from all geodesic 5-cycles to $\{+, - \}$.

Distinct cycles may share the same label. In fact, we can build a net of 4-cycles between any two geodesic 5-cycles with the same label by using a series of diagonal flips which correspond to a 
path between the two cycles. We say two geodesic $5$-cycles $C$ and $C'$ differ by a net of $4$-cycles if there is a sequence $C_1, \ldots, C_k$ of geodesic $5$-cycles, 
where $C_i$ has vertices $v_{1,i}, \ldots, v_{5,i}$ such that:
\begin{enumerate}
\item $C_1 = C$, $C_k = C'$
\item For each $v_{i,j}$ is adjacent to $v_{i, j+1}$ for $1 \leq i \leq 5$, $1 \leq j \leq k-1$
\end{enumerate}

\begin{proposition}
 \label{prop:edgepath}
Let $C$ and $C'$ be geodesic 5-cycles in $\Gamma$ with $L(C) = L(C')$ and $\epsilon(C) = \epsilon(C')$. Then there is a net of $4$-cycles between $C$ and $C'$.
\end{proposition}

\begin{proof}
Let $L(C) = \{a,b,c,d,e \}$ with $1 \leq a < b < c < d < e \leq n+3$.
The polygonal dissection $D$ of $P_{n+3}$ corresponding to the cycle $C$ has $n-1$ triangles, and one pentagon $P'$. The pentagon $P'$ has vertices $a,b,c,d,e$. 
Let $R_{ab}$ be the polygon with vertices $a, a+1, \ldots, b$. Suppose $R_{ab}$ has at least three vertices. $D$ induces a triangulation $T$ on $R_{ab}$. 
Similarly, $C'$ corresponds to a polygonal dissection $D'$, which induces a triangulation $T'$ on $R_{ab}$. Suppose that $T \neq T'$. 
There is a sequence of diagonal flips $f_1, \ldots, f_k$ in $R$ which lead from $T$ to $T'$. Since the diagonal flips occur in $R_{ab}$, 
they do not intersect the interior of $P'$. 

Let $T_0, T_1, T_2, T_3, T_4$ be the nodes of $C$ such that $T_i$ is adjacent to $T_{i+1}$ (where addition is modulo 5). For $0 \leq i \leq 4$, $1 \leq j \leq k$, 
define $T_{i,j} = f_j f_{j-1} \cdots f_1 T_i$. Then $T_{0,k}, T_{1,k}, T_{2,k}, T_{3,k}, T_{4,k}$ forms the node set of a geodesic 5-cycle $C^*$. 
Moreover, the corresponding polygonal dissection $D^*$ consists of $(D \cup T') \setminus T$. 
Also, $\{T_{i,j}: 0 \leq i \leq 4, 1 \leq j \leq k \}$ is the vertex set of a net of $4$-cycles from $C$ to $C'$. This is because for 
each $i$ and $j$, $T_{i,j+1}$ differs from $T_{i,j}$ by a diagonal flip $f_j$. Moreover, the flip corresponding to the edge $T_{i,j}, T_{i+1,j}$ 
involves diagonals in $P'$, while the flip $f_j$ involves only diagonas in $R_{ab}$. Thus, the flips commute, and so $T_{i, j+1}$ is adjacent to $T_{i+1, j+1}$.

Let $R_{bc}$ be the polygon with vertices $b, b+1, \ldots, c$. By the same argument, we can construct a net of $4$-cycles between $C'$ and a new 
5-cycle whose corresponding polygonal dissection agrees with $D$ outside of $R_{ab}$ and $R_{bc}$, and agrees with $D'$ in the regions $R_{ab}, R_{bc}$. 
Continuing in this fashion, one eventually constructs a net of $4$-cycles from $C$ to $C'$.   \end{proof}

In the last section we defined a homomorphism $\psi:A_1^{n-2}(\TN) \to \Lambda$, where $\Lambda$ is the free abelian group generated 
by all edge labels. Since it is abelian, there is an induced map $\psi:A_1^{n-2}(\TN)^{ab} \to \Lambda$, which we use in the proof of the following theorem.
\begin{theorem}
 \label{thm:maintheorem}
Let $C$ and $C'$ be geodesic 5-cycles. Then $C \A C'$ if and only if $L(C)=L(C')$ and $\epsilon(C) = \epsilon(C')$. 
\end{theorem}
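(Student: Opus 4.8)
The plan is to prove the two directions of the biconditional separately, with the forward direction being essentially immediate from the labeling machinery already developed and the reverse direction being the substantive content.

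For the forward direction ($C \A C'$ implies $L(C) = L(C')$), I would argue that the label of a geodesic 5-cycle is a homotopy invariant. By Proposition \ref{prop:t123prop}, any homotopy between $(\sigma)$ and $(\tau)$ is realized by a sequence of stretch, insert, and switch moves. Using Remark \ref{rem:edgeunion}, $L(C)$ is the union of the five distinct edge labels of $C$, equivalently the multiset of letters in the word $w_C$ read with multiplicity (each appearing once). I would track how the word $w_\ell$ changes under each of the three moves: stretch does nothing to $w_\ell$; insert adds the square of a letter; switch transposes two adjacent letters. Hence the parity of each letter's multiplicity in $w_\ell$ is preserved (this is the Remark following Proposition \ref{prop:t123prop}). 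Since a geodesic 5-cycle traverses each of its five edge-labels exactly once, the set of labels occurring an odd number of times in $w_C$ is exactly $L(C)$ (as a set of 4-subsets of $\{1,\dots,n+3\}$), and this odd-multiplicity-support is a homotopy invariant. Therefore $L(C) = L(C')$. One subtlety to address carefully: a priori a homotopy from $C$ to $C'$ passes through loops that need not be geodesic 5-cycles, so I must phrase the invariant purely in terms of words/parities so that it makes sense along the whole homotopy, then specialize at the two endpoints where the loops happen to be geodesic 5-cycles; this is exactly why the parity formulation (rather than "the label") is the right invariant.

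For the reverse direction ($L(C) = L(C')$ implies $C \A C'$), I would use the net-of-4-cycles construction described in the paragraph preceding the theorem, made rigorous. First, since $L(C) = L(C')$, both partial triangulations leave the same pentagon $P$ (on vertices $a,b,c,d,e$) untriangulated, and the two 5-cycles are obtained by triangulating the pentagon in its five ways while holding fixed a triangulation of the complementary regions. I need to match up the nodes: pick the five ways to triangulate the pentagon, and for each, pair the node of $C$ with the node of $C'$ that triangulates the pentagon identically. This also matches edge labels: $L((T_i,T_{i+1})) = L((T_i',T_{i+1}'))$ for each $i$, because those labels are precisely the four vertices of the pentagon involved in the corresponding flip inside $P$, which depends only on the pentagon and the pair of pentagon-triangulations, not on the outside. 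Then I apply Proposition \ref{prop:edgepath} to get, for each of the five matched pairs $(T_i, T_i')$, a common sequence of diagonal flips $f_1,\dots,f_k$ — the same sequence works simultaneously for all five pairs since these flips occur entirely outside $P$ and hence can be performed regardless of how $P$ is triangulated. Laying these five parallel paths side by side produces a grid (in the sense of Definition \ref{def:ahomotopic}(2)): the rows are the intermediate 5-cycles, consecutive rows differ by a single flip $f_j$ applied in all five nodes, and because $f_j$ commutes with every edge of the 5-cycle (it takes place outside $P$ while the cycle's flips take place inside $P$), consecutive-row entries are indeed $(n-2)$-near as required. After $k$ steps we have transformed $C$ into $C'$, so $C \A C'$. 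I should also handle base-point bookkeeping: to literally invoke Definition \ref{def:ahomotopic} one fixes $T_0$; I would either take $T_0$ to be one of the nodes of $C$ (and note the homotopy fixes it — or connect $C$ and $C'$ to a common base node first, which is harmless by the base-point independence quoted from \cite{bklw}), and read the 5-cycle as a based loop by inserting a fixed path from $T_0$.

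The main obstacle I expect is the reverse direction, specifically verifying cleanly that the five parallel edge-paths supplied by Proposition \ref{prop:edgepath} genuinely assemble into a valid grid — i.e., that at every intermediate stage the five nodes still form a geodesic 5-cycle (equivalently, that applying an "outside" flip $f_j$ to all five triangulations-of-the-complement simultaneously is well-defined and preserves both the $(n-2)$-nearness along the cycle and the vertical $(n-2)$-nearness between rows). This requires the observation that a flip outside $P$ commutes with a flip inside $P$ precisely because the two quadrilaterals involved are disjoint, so the order of performing them is irrelevant and all the nearness relations in the grid hold. Once that commutation bookkeeping is in place the argument is essentially combinatorial bookkeeping; the forward direction is comparatively soft given the parity remark already established.
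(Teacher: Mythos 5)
Your proposal is correct and follows essentially the same route as the paper: the reverse direction uses Proposition \ref{prop:edgepath} together with the net-of-4-cycles construction described just before the theorem, and the forward direction is the parity-of-letters argument (the paper states it contrapositively, but it is the same invariant preserved by stretch, insert and switch). The extra care you take about intermediate loops not being geodesic 5-cycles and about matching nodes and edge labels across $C$ and $C'$ only makes explicit what the paper leaves implicit.
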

\begin{proof}
First, suppose that $L(C) \neq L(C')$. Then, in particular, there is an edge $e$ of $C$ such that $L(e) \neq L(e')$ for all edges $e'$ on $C'$. 
Note then that the coefficient of $L(e)$ in $\psi(C)$ is non-zero, yet the coefficient of $L(e)$ in $\psi(C')$ is zero. Then $\psi(C) \neq \psi(C')$, and hence $C$ is not equivalent to $C'$.
So suppose that $L(C) = L(C') = \{a,b,c,d,e \}$, with $a < b < c < d < e$, and suppose further that $\epsilon(C) = +$ and $\epsilon(C') = -$.
 Let $e$ be the edge of $C$ such that $L(e) = \{b,c,d,e \}$. Then the coefficient of $L(e)$ in $\psi(C)$ is 1, and yet the coefficient of $L(e)$ in $\psi(C')$ is $-1$. 
Again we conclude that $C$ is not equivalent to $C'$.

Finally, suppose that $L(C) = L(C')$ and $\epsilon(C) = \epsilon(C')$.
Then by Proposition \ref{prop:edgepath}, there is a net of 4-cycles between $C$ and $C'$, and hence they are equivalent.   \end{proof}
As a result of Theorem \ref{thm:maintheorem}, we can now associate a label to any equivalence class $[C]$ of geodesic 5-cycles. That is, 
if $C$ is a representative 5-cycle in $[C]$, we define $L([C]) = L(C)$. Of course, under this notation, $L(-[C]) = L([C])$. Finally, given a label set $S$, 
let $[C]_S$ represent the equivalence class of $5$-cycles such that $L(C) = S$ and $\epsilon(C) = +$.

The set $\{[C]_S: S \subset [n+3], |S| = 5 \}$ forms a generating set of size $\binom{n+3}{5}$ for $A_1^{n-2}(\TN)^{ab}$. However, $A_1^{n-2}(\TN)^{ab}$ is free abelian on a subset of this generating set.
We first present an example. Consider the graph $\Gamma^{1}(\EuScript{T}_3)$, shown in Fig. \ref{fig:3dasclabels}. 

\begin{figure}[htbp]
\begin{center}
\scalebox{.5}{\includegraphics{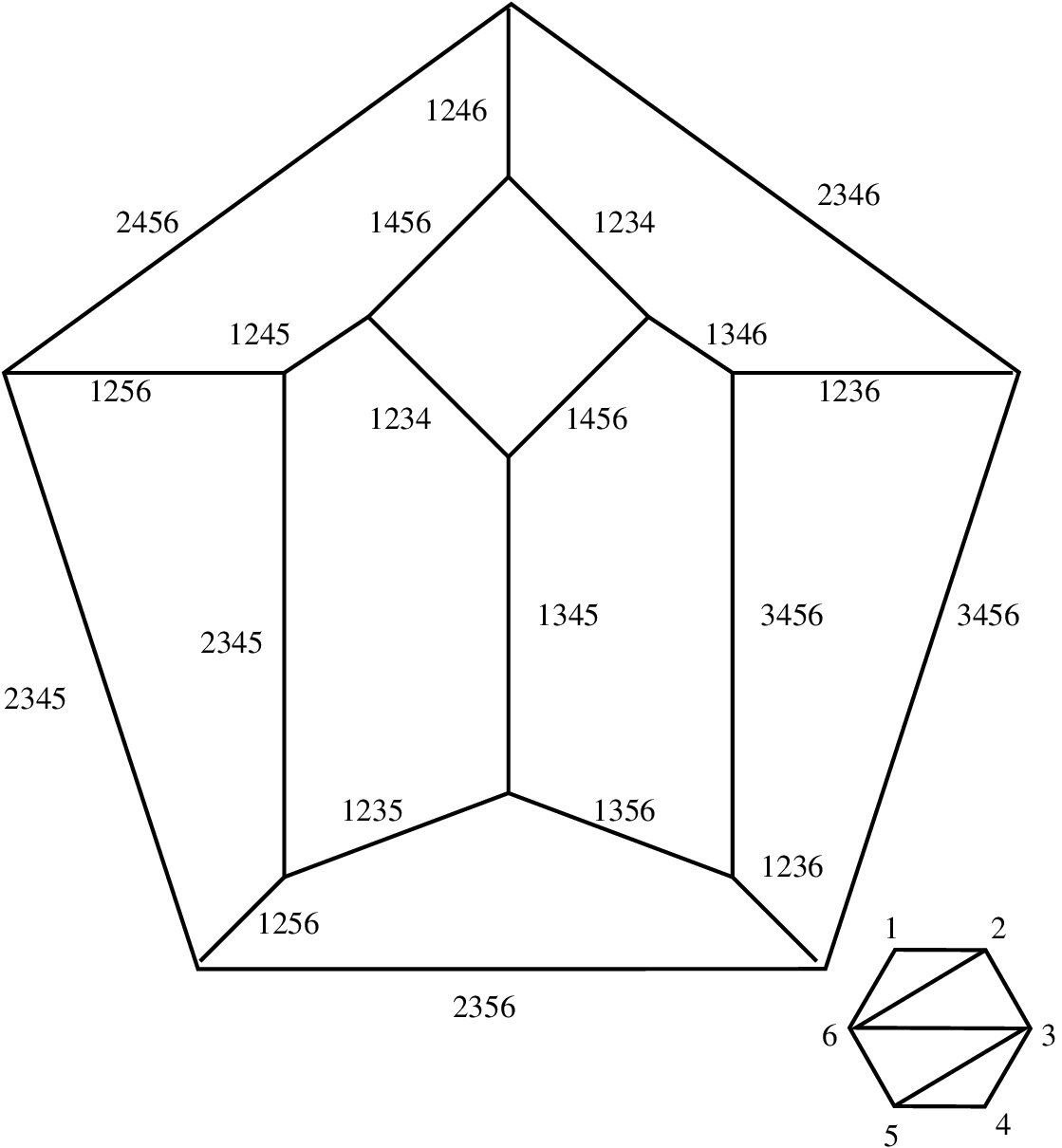}}

\caption{The graph $\Gamma^{1}(\EuScript{T}_3)$ with edge labels. One of the lower right vertices is labeled with the corresponding triangulation of the hexagon.}
\label{fig:3dasclabels}

\end{center}
\end{figure}

The geodesic 5-cycle with label $\{2,3,4,5,6\}$ can be written as a sum of geodesic 4-cycles and the geodesic 5-cycles that have a 1 in their label. 
Thus, we can $A_1^1(\EuScript{T}_3)^{ab}$ actually has rank at most 5. Our main theorem claims that the rank, in this case, is 5:
\begin{theorem}
\label{thm:basisforabelian}
 $A_1^{n-2}(\TN)^{ab}$ is free abelian of rank $\binom{n+2}{4}$. A basis is given by $B = \{[C]_S: S \subset [n+3], |S| = 5, 1 \in S \}$, 
where $[C]_S$ is the equivalence class of geodesic 5-cycles whose label set is $S$, and such that $\epsilon(C) = +$.
\end{theorem}
Note that Theorem \ref{thm:intro2} follows from this theorem.
\begin{proof}
This proof contains two parts. First, we show that, for any geodesic 5-cycle $C$ without a 1 in its label, there is another geodesic 
$5$-cycle $C'$, such that $C' \A C$ and $C'$ may be written as a sum of geodesic 5-cycles with a 1 in their labels. Then we show that there are no 
relations between geodesic 5-cycles that have distinct labels which contain the element 1. 

Let $C$ be a geodesic 5-cycle with $L(C)=\{a,b,c,d,e\}$, $a < b < c < d < e$ and such that $1 \notin L(C)$. Let $D$ be the corresponding polygonal dissection of 
$P_{n+3}$ into $n-1$ triangles and one pentagon, whose vertices are $a,b,c,d,e$. Suppose that one of the triangles of $D$ has vertices $1,a,e$. 
Let $\Gamma'$ be the subgraph of $\Gamma$ induced by the vertex set $\{T: D \setminus \{ae \} \subset T \}$. Then  $\Gamma'$ is isomorphic to the graph in Fig. \ref{fig:3dasclabels}, where $2,3,4,5,6$ are replaced by $a,b,c,d,e$. 
The geodesic 5-cycle on the boundary of 
the graph is $C$, and is equal to a sum of those cycles with a 1 in their label. 

Suppose then that $D$ does not have a triangle with vertices $1,a$ and $e$. It suffices to show that there exists a cycle $C'$, such that $C \A C'$, 
and the polygonal dissection corresponding to $C'$ does have a triangle with vertices $1,a$ and $e$.

Consider the polygon $P'$ given by vertices $1, 2, \ldots, a-1, a, e, e+1, \ldots, n+3, 1$. $D$ induces a triangulation $T$ on $P'$. There exists a triangulation 
$T'$ of $P'$ that has a triangle with vertices $1,a$ and $e$. Let $D'$ be polygonal dissection $(D \cup T') \setminus T$, and let $C'$ be the corresponding 
5-cycle of $\Gamma$. Since $L(C') = \{a,b,c,d,e \} = L(C)$, we must have  $C \A C'$, by Theorem \ref{thm:maintheorem}. Since $D'$ does have a triangle with vertices 
$1, a$, and $e$, it is the sum of 5-cycles that have $1$ in their label sets. Hence every equivalence class of $5$-cycles $[C]$ can be written as a linear combination of elements of $B$.

 Given $x \in \Lambda$, we write $x = \sum_{1 \leq a < b < c < d \leq n+3} \langle x, \{a,b,c,d \} \rangle$. Then, for all $S \in B$, $1 < a < b < c < d \leq n+3$, 
we have $\langle \psi([C]_S), \{a,b,c,d\} \rangle = \delta_{S \setminus \{1\}, \{a,b,c,d \}}$, the Kronecker Delta function.
We wish to show that $B$ is linearly independent over $\mathbb{Z}$. 
Suppose we have $x = \sum_{S \in B} c_S [C]_S = 0$. 
Then $\psi(x) = \sum_{S \in B} c_S \psi([C]_S) = 0$. Then for every $1 < a < b < c < d$, we have 
\begin{displaymath} 
\begin{array}{ccc} 
 \langle \psi(x), \{a,b,c,d\} \rangle & = & \sum_{S \in B} c_S \langle \varphi([C]_S), \{a,b,c,d \} \rangle \\ & = & \sum_{S \in B} c_S \delta_{S \setminus \{1\}, \{a,b,c,d\}} \\ & = & c_{\{1,a,b,c,d\}}
\end{array}
\end{displaymath}
However, since $\langle \psi(x), \{a,b,c,d \} \rangle = 0$, it follows that $c_{\{1,a,b,c,d \}} = 0$. In particular, $x = 0$. Thus, $B$ is linearly independent, 
and forms a minimal generating set for $A_1^{n-2}(\TN)^{ab}$.   \end{proof}

\section{The Type $A$ Cluster Algebra and the Exchange Module} \label{sec:review}
Now that we understand the structure of the abelianization of the discrete fundamental group, it would be nice to have a cluster algebra description of this group. 
That is, since the discrete fundamental group represents relations in the mutation process, modulo commuting mutations, 
it should be possible to describe the abelianization in terms of relations in the mutation process. To this end, we define the exchange module, 
and show that it is isomorphic to the abelianization of $A_1^{n-2}(\TN)$.

First we review the construction of the type $A_n$ cluster algebra. First, let $P_n$ be a polygon on $n+3$ vertices. Fix a triangulation $T$ of $P$; label the diagonals of $T$ with the numbers 
$1, \ldots, n$ in some order, and the boundary edges of $P$ with $n+1, \ldots, 2n+3$. 
Now consider indeterminates $x_1, \ldots, x_{2n+3}$, where $x_1, \ldots, x_n$ (called \emph{cluster variables}), and $x_{n+1}, \ldots, x_{2n+3}$ (\emph{frozen variables}).
The cluster algebra of type $A$ is a subring of $\mathbb{Q}(x_1, \ldots, x_{2n+3})$, with one generator $x_{k'}$ for each diagonal $k$, and subject to relations of the form \begin{equation} x_k x_{k'}= x_ax_c + x_b x_d,\label{eq:cluster_relation} \end{equation}
 where $a,b,c,d$ are diagonals bounding a quadrilateral, and the diagonals $k$ and $k'$ intersect inside the quadrilateral. 
One can prove that each generator is in fact a Laurent polynomial in the original variables. The generators $x_{k'}$ are also referred to as cluster variables.
For general cluster algebras, the notion of diagonal flip is replaced with the notion of mutation, the concept of triangulation is replaced with the concept of cluster, 
and the initial triangulation $T$ is replaced with an initial \emph{seed}. However, we choose to only give the definitions for type $A$, since this cluster algebra is the focus of the current paper.

We now define the exchange module, one of the groups that is the focus of this paper.
Every set of noncrossing diagonals induces a polygonal dissection of $P$. Suppose $D$ is a set of noncrossing diagonals for which the polygonal dissection 
consists of triangles and exactly one pentagon. Label the vertices of $P$ clockwise as $1, \ldots, n+3$, and let $a,b,c,d,e$ be the vertices of the pentagon, with 
$a < b < c < d < e$. Given all ways to complete $D$ to get a triangulation $T$, we obtain a cluster. Then we have one mutation relation 
for each diagonal flip that occurs inside of the pentagon. We represent these relations as variables $x_{ac,bd}$, where we have flipped the diagonal with endpoints $a$ and $c$ to the diagonal with endpoints $b$ and $d$. 
From considering diagonal flips involving diagonals in the interior of the pentagon, we define \begin{equation} X_{a,b,c,d,e} =  
x_{ac, bd} + x_{ad, be} + x_{bd, ce} - x_{be, ac} - x_{ce, ad} \label{eq:pentagonalvariable} \end{equation}
to be the \emph{pentagonal relation} for the embedded pentagon with endpoints $a,b,c,d,e$. The diagonals appearing in the relation depend only on the boundary of pentagon.
These relations are \emph{pentagonal relations}. Note that these correspond to relations among clusters. That is, given a cluster corresponding to $T$, if we flip the diagonals of the pentagon in a certain order, the resulting cluster is the 
same as the starting cluster. This pentagonal relation phenomenon is interesting, as it does not seem to correspond to a classically studied group. 
The exchange module, defined below, is an attempt to capture this phenomenon as 

\begin{definition}
Fix a polygon $P = P_{n+3}$ with $n+3$ vertices, labeled $1, \ldots, n+3$ in clockwise order. Let $Cr(P)$ denote the set $\{ \{\alpha, \beta \}: 
\alpha \mbox{ and } \beta \mbox{ is a pair of crossing } \newline \mbox{ diagonals} \}$. We often refer to a diagonal $\alpha$ by its endpoints. 
That is, $13$ represents the diagonal with endpoints $1$ and $3$. Given $1 \leq a < b < c < d < e \leq n+3$, let 
$X_{\{a,b,c,d,e\}}$ be the pentagonal relation defined in Eq. \eqref{eq:pentagonalvariable}.
 The \textbf{exchange module}, $E(\mathcal{A}_n)$, is the abelian group with generating set $\{x_{\{\alpha, \beta\}}: \{\alpha, \beta\} \in Cr(P) \}$, and subject to all relations of the form
$X_{\{a,b,c,d,e\}} = 0$, for all $1 \leq a < b < c < d < e \leq n+3$. 

\label{def:primary}
\end{definition}
Note that we will abuse notation, and let $x_{\alpha, \beta} = x_{\{ \alpha, \beta \}}$.

Assume that we have ordered the diagonals in the following way: for diagonals $\alpha$ and $\beta$, $\alpha < \beta$ if 
$\min \alpha < \min \beta$, where we identify diagonals with their endpoints.
The sequence of diagonal flips in Fig. \ref{fig:pentagonal} gives the linear relation $x_{\{13, 24\}} + x_{\{14, 25\}} + x_{\{24, 35\}} - x_{\{25, 13\}} - x_{\{35, 14\}} = X_{\{1,2,3,4,5\}} = 0$.
Positive signs in the linear relation correspond to cases when flipping replaces a diagonal $\alpha$ with a larger diagonal $\beta$.
 This is the motivation behind the defining relations for $E(\mathcal{A}_n)$.

The exchange module is generated by variables corresponding to diagonal flips, and has relations corresponding to pentagons. Thus, the exchange module is defined entirely in terms of the combinatorics of the type $A$ cluster algebra, 
and the study of recurrence relations which appear in the exchange process. It seems like $E(\mathcal{A}_n)$ loses information, since we are no longer keeping track of the clusters. In the next section, we shall see that 
$E(\mathcal{A}_n) \simeq A_1^{n-3}(\TN)^{ab}$, and hence the information lost corresponds to abelianization. Thus, in some sense the exchange module is an abelian `approximation' to the study of the mutation process for type $A$ 
cluster algebras.

\section{Rank of the Exchange Module} \label{sec:cluster}

The goal of this section is the prove the following theorem.
\begin{theorem}
\label{thm:isomorphism0}
$E(\mathcal{A}_n)$ is free abelian, of rank $\binom{n+2}{3}$. 

\end{theorem}

Before we continue, let us fix some notation.
 Given $1 \leq a < b < c < d < e \leq n+3$, let $X_{a,b,c,d,e}$ be given by Eq. \ref{eq:pentagonalvariable}.
Let $F(\mathcal{A}_n)$ be the free $\mathbb{Z}$-module with generating set $\{ y_{\{\alpha, \beta\}}: \{\alpha, \beta \} \in Cr(P) \}$. 
 Let $\theta: F(\mathcal{A}_n) \to E(\mathcal{A}_n)$ be given by \begin{equation} \theta(y_{\{\alpha, \beta \}}) = x_{\{\alpha, \beta\}} \label{eq:definetheta} \end{equation}

We prove Theorem \ref{thm:isomorphism0} by showing that $\ker \theta$ is isomorphic to $A_1^{n-2}(\TN)^{ab}$. Since we know $A_1^{n-2}(\TN)^{ab}$ is free abelian, we get a basis for 
$\ker \theta$, which we use to show that $E(\mathcal{A}_n)$ is isomorphic to a subgroup of $F(\mathcal{A}_n)$ of rank $\binom{n+2}{3}$.

 \begin{figure}[htbp]
\begin{center}
\scalebox{.15}{\includegraphics{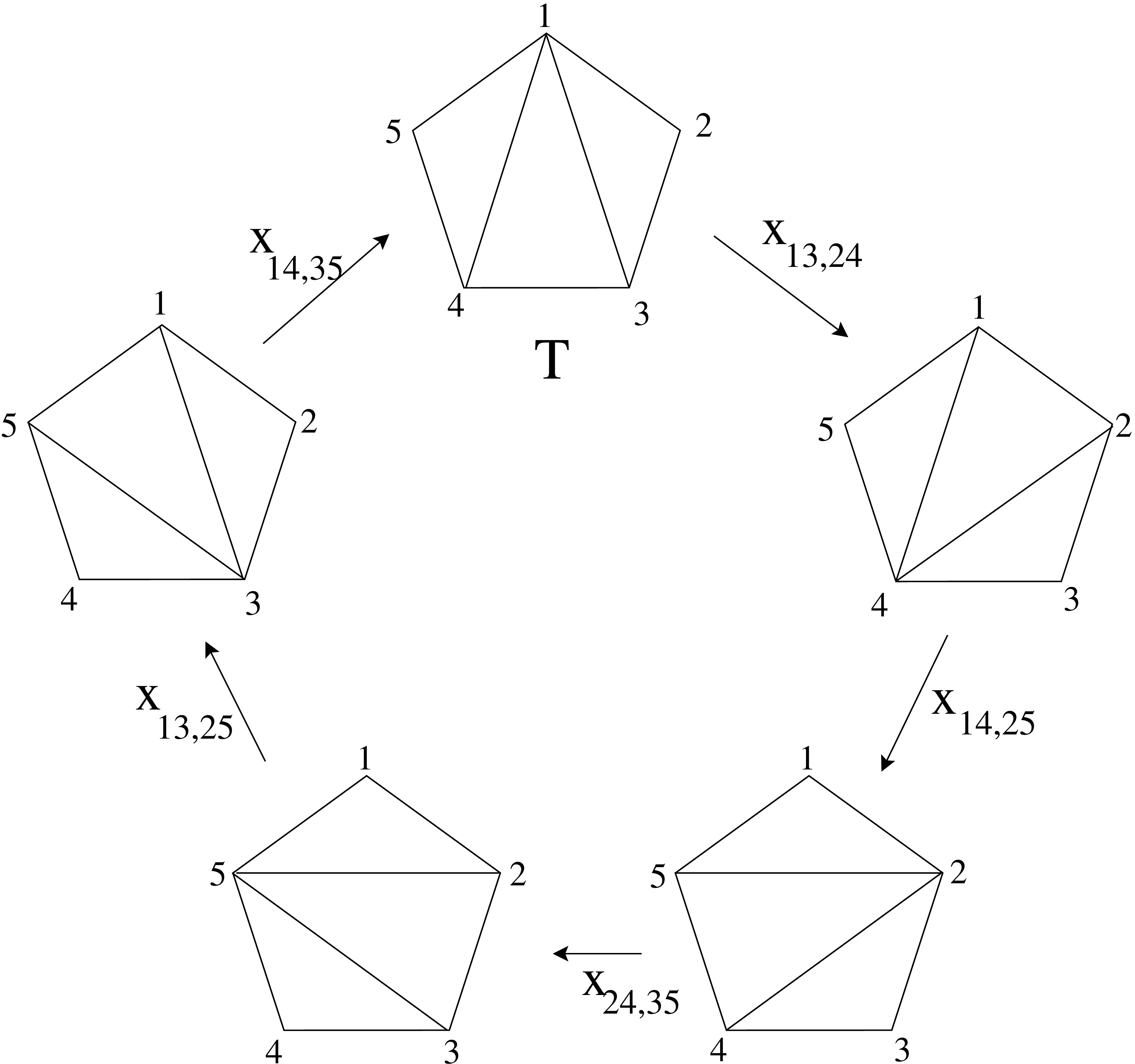}}
\caption{A sequence of flips corresponding to a pentagonal recurrence.}
\label{fig:pentagonal}
\end{center}
\end{figure}

Let $$L = \{S: S \subseteq [n+3], |S| = 5, 1 \in S \}.$$ By Theorem \ref{thm:basisforabelian}, $A_1^{n-2}(\TN)^{ab}$ is free abelian with basis $\{[C]_S: S \in L \}$, 
where $[C]_S$ is the equivalence class of geodesic 5-cycles with label set $S$ and positive sign. We show that $\ker \theta$ is free abelian, and $\{Y_S: S \in L \}$ is a basis for $\ker \theta$. 
For $S \in L$, let \begin{equation} \varphi([C]_S) = Y_S \label{eq:defvarphi} \end{equation} and extend the map by linearity, to get a homomorphism $\varphi: A_1^{n-2}(\TN)^{ab} \to \ker \theta$.
It suffices to show that $\varphi$ is an isomorphism.

For the proof, it is useful to fix some notation. 
Consider a free $\mathbb{Z}$-module $M$ with basis $B$. Given $x \in M$, we let $x = \sum_{b \in B} \langle x,b \rangle b$. 
That is, when we express $x$ in terms of the basis $B$, $\langle x, b \rangle$ denotes the coefficient of 
$b$.

\begin{theorem}
 \label{thm:isomorphism1}
The map $\varphi: A_1^{n-2}(\TN)^{ab} \to \ker \theta$ is an isomorphism.
\end{theorem}
\begin{proof}
$\ker \theta$ is generated by $\{Y_S: S \subset [n+3], |S| = 5 \}$. 
Consider the set $B' = \{Y_S: S \in L \}$.
It suffices to show that, given any $1 < a < b < c < d < e \leq n+3$, $Y_{a,b,c,d,e}$ can be expressed as a linear combination of elements of $B'$.
Observe that:  
\begin{displaymath}
\begin{array}{lll}
 Y_{a,b,c,d,e} & = & y_{ac, bd} + y_{ad,be} + y_{bd, ce} - y_{ac, be} - y_{ad, ce}  \\
               & = & y_{ac, bd} - y_{1b,ad} - y_{1c, bd} + y_{1b, ac} + y_{1c, ad}  \\
               & + & y_{ad, be} - y_{1b,ae} - y_{1d, be} + y_{1b, ad} + y_{1d, ae}  \\
               & + & y_{bd, ce} - y_{1c,be} - y_{1d, ce} + y_{1c, bd} + y_{1d, be}  \\
               & - & y_{ac, be} - y_{1c,ae} - y_{1b, ac} + y_{1c, be} + y_{1b, ae}  \\
               & - & y_{ad, ce} - y_{1d,ae} - y_{1c, ad} + y_{1d, ce} + y_{1c, ae}  \\
               & = & Y_{1,a,b,c,d} + Y_{1,a,b,d,e} + Y_{1,b,c,d,e} - Y_{1,a,b,c,e} - Y_{1,a,c,d,e}
\end{array}
\end{displaymath}
Therefore, the map $\varphi$ is surjective. 

Now we show that $\ker \varphi$ is trivial. Let $[C']$ be such that $\varphi([C']) = 0$. Fix $1 < a < c < b < d \leq n+3$. 
Since $\varphi([C']) \equiv 0$, we must have $\langle \varphi([C']), y_{ac,bd} \rangle = 0$. 
By Lemma \ref{lem:coeffs}, it follows that $\langle [C'], [C]_{1,a,b,c,d} \rangle = 0$. 
Since $\{[C]_S: S \in L \}$ forms a basis for $A_1^{n-2}(\TN)^{ab}$, we see that $[C'] = 0$ in $A_1^{n-2}(\TN)^{ab}$.
  \end{proof}

\begin{lemma}
 \label{lem:coeffs}
Let $1 < a < b < c < d \leq n+3$, and consider $[C] \in A_1^{n-2}(\TN)^{ab}$. 
Then $\langle [C],[C]_{1,a,b,c,d} \rangle = \langle \varphi([C]), y_{ac,bd} \rangle$.
\end{lemma}
\begin{proof}
Note that, for any $S \subset [n+3]$ with $1 \in S$ and $|S| = 5$, we have $\langle Y_S, y_{ac,bd} \rangle = \delta_{S, \{1,a,b,c,d \}}$, the 
Kronecker delta function. 
Given $[C] \in A_1^{n-2}(\TN)^{ab}$, we see that 
\begin{displaymath}
 \begin{array}{ccc}
  \langle \varphi([C]), y_{ac,bd} \rangle & = & \sum_{S \in L} \langle [C], [C]_S \rangle \langle \varphi([C]_S), y_{ac,bd} \rangle \\ 
& = & \sum_{S \in L} \langle [C], [C]_S \rangle \langle Y_S, y_{ac,bd} \rangle \\
& = & \sum_{S \in L} \langle [C], [C]_S \rangle \delta_{S, \{1,a,b,c,d \}} \\
& = & \langle [C], [C]_{1,a,b,c,d} \rangle
 \end{array}
\end{displaymath}   \end{proof}

\begin{corollary}
\label{cor:maincor}
 $E(\mathcal{A}_n) \simeq F(\mathcal{A}_n) / \ker \theta$. In particular, the exchange module is a quotient of a free module by $A_1^{n-2}(\TN)^{ab}$.
\end{corollary}

\begin{remark}
 \label{rem:transferbasis}
 We have $\ker \theta$ is free abelian of rank $\binom{n+2}{4}$, 
and is freely generated by $\{Y_{1,a,b,c,d}: 1 < a < b < c < d \leq n+3 \}$. This follows from the proof of Theorem \ref{thm:basisforabelian} and the above isomorphism $\varphi$.
\end{remark}

Let $G$ be the free abelian subgroup of $F(\mathcal{A}_n)$ generated by \\ $B^* := \{y_{\alpha, \beta}: 1 \mbox{ is an endpoint of } \alpha \}$. 

\begin{theorem}
 The map $\theta|_G : G \to E(\mathcal{A}_n)$ is an isomorphism.
\end{theorem}

\begin{proof}
 $E(\mathcal{A}_n)$ is generated by $\{x_{ac,bd}: 1 < a < b < c < d \leq n+3 \}$. It suffices to show that, given $1 < a < b < c < d \leq n+3$, $x_{ac,bd}$ can be expressed as a sum of elements from $\theta(B^*)$. 
The expression $X_{1,a,b,c,d} = 0 $ can be rewritten as
\begin{equation} x_{ac,bd} = - x_{1b, ac} - x_{1c,ad} + x_{1b, ad} + x_{1c, bd}. \label{eq:pentagonalvariabletwo} \end{equation}
All the terms on the right hand side are in $\theta(B^*)$, so $\theta|_G$ is surjective.

We show that $\ker \theta|_G$ is trivial. Let $y \in G$, such that $\theta|_G(y) = 0$. Then $y \in \ker \theta$, which has basis $B' = \{Y_S: S \subset [n+3], |S| = 5, 1 \in S \}$.
However, for any choice $1 < a < b < c < d \leq n+3$, we see that
\begin{displaymath} 
\begin{array}{ccc} 
 \langle y, Y_{1,a,b,c,d}\rangle & = & \langle \varphi^{-1}(y), [C]_{1,a,b,c,d} \rangle \\ & = & \langle y, y_{ab,cd} \rangle \\ & = & 0
\end{array}
\end{displaymath}
where the first equality comes from the fact that $\varphi$ is an isomorphism, the second equality comes from Lemma \ref{lem:coeffs}, 
and the third equality comes from the fact that $y \in G$ and 
$y_{ab,cd} \not\in G$. So $\langle y,Y_{1,a,b,c,d} \rangle = 0$ for all $1 < a < b < c < d \leq n+3$. However, by Remark \ref{rem:transferbasis}, the set $\{Y_{1,a,b,c,d}: 1 < a < b < c < d \leq n+3 \}$ forms a 
basis for $\ker \theta$. 
We conclude that $y \equiv 0$, and hence $\ker \theta|_G$ is trivial.
  \end{proof}
Since $G$ is free abelian of rank $\binom{n+2}{3}$, so is $E(\mathcal{A}_n)$. Moreover, we obtain an explicit basis, given by $\{x_{\alpha, \beta}: 1 \mbox{ is an endpoint of } \alpha \}$.

\section{Future Directions} \label{sec:future}

 The associahedron has also been generalized to a type-$B$ associahedron, 
called the cyclohedron \cite{simion,bott-taubes} as well as other classes of polytopes such as graph associahedra \cite{carr-devadoss}, generalized associahedra \cite{fomin-zel-finite} and 
generalized permutahedra \cite{postnikov-permuto}. A natural extension of our work here is to use the same process to study some of these generalizations. 

We first look to the generalized associahedra of Fomin and Zelevinsky in \cite{fomin-zel-finite}. The generalized associahedra of type-$B$ has a description in terms of centrally symmetric 
triangulations. The type-$D$ associahedron also has a description in terms of centrally symmetric triangulations with some extra restrictions due to the similarities between the Coxeter groups 
of type-$B$ and $D$. In the case of the type-$B$ associahedron, there are 5-cycles appearing in the same way as the type-$A$ associahedron, and 6-cycles appearing when the central 
diagonal of a triangulation and an adjacent diagonal are removed. Using the same techniques we developed in this paper, it is easy to define and study exchange modules in 
the cluster algebra of type-$B_n$. The type-$D$ associahedron has a more difficult description that we will omit here, however it is easy to show that it contains only geodesic 4- 
and 5-cycles and that we can use the same techniques to study an exchange module for the type-$D_n$ cluster algebra. 

Within the realm of cluster algebras, there are combinatorial descriptions of the cluster complexes of any cluster algebra arising from a triangulated surface, as introduced by
 Fomin, Shapiro and Thurston \cite{fomin-shapiro-thurston}. A further, but possibly more difficult, extension of our work would be to study the discrete fundamental groups for this class of 
cluster complexes. 

We might also study discrete homotopy theory of the graph associahedra of Carr and Devadoss \cite{carr-devadoss}. 
This generalization gives combinatorial descriptions for polytopes in terms of connected components of graphs. Our initial investigations suggest that much 
of the work done in this paper regarding discrete homotopy theory will also carry over to these objects. 

Finally, we hope that the exchange module $E(\mathcal{A}_n)$ finds application in cluster algebras. An open problem is to generalize the definition of exchange module to other types of 
cluster algebras.

\section*{Acknowledgments}
We would like to thank Sergey Fomin for helpful discussions regarding cluster algebras. 
The second and third authors were partially supported by NSF grants DMS-0441170 and DMS-0932078,
administered by the Mathematical Sciences Research Institute while the
authors were in residence at MSRI during the Complementary Program, Fall 2009 - Spring 2011.
This work was developed during the
visit of the authors to MSRI and we thank the institute for its
hospitality.

\bibliographystyle{amsplain}
\bibliography{atbib}

\end{document}